\documentclass[a4paper, 11pt]{amsart}
\usepackage{mathptmx,amssymb,amscd,latexsym, eulervm}
\usepackage{amsmath}
\usepackage{amsthm}
\usepackage{mathdots}
\usepackage[colorlinks=true,citecolor=violet,linkcolor=blue,urlcolor=blue]{hyperref}
\usepackage[dvipsnames]{xcolor}
\usepackage[onehalfspacing]{setspace}
\usepackage{tabularx}
\usepackage{amsfonts}
\usepackage{paralist}
\usepackage{aliascnt}
\usepackage{amscd}
\usepackage{blkarray}
\usepackage{mathbbol}
\usepackage{setspace}
\usepackage{needspace}
\usepackage[inner=2.4cm,outer=2.4cm, bottom=3.2cm]{geometry}
\usepackage{tikz, tikz-cd}
\usepackage{calligra,mathrsfs}

\usepackage{tikz}
\usetikzlibrary{matrix}
\usetikzlibrary{arrows,calc}
\allowdisplaybreaks

\AtBeginDocument{%
	\def\MR#1{}
}

\makeatletter
\@namedef{subjclassname@2020}{%
	\textup{2020} Mathematics Subject Classification}
\makeatother

\newcommand{\sZ}{\mathcal{Z}}
\newcommand{\sU}{\mathscr{U}}

\newcommand{\kk}{\mathbb{k}}

\newcommand{\ZZ}{\mathbb{Z}}

\newcommand{\PP}{{\normalfont\mathbb{P}}}

\newcommand{\QQ}{\mathbb{Q}}

\newcommand{\II}{\mathscr{I}}

\newcommand{\sH}{\mathscr{H}}

\newcommand{\OO}{\mathscr{O}}

\newcommand{\HH}{{\normalfont\text{H}}}

\newcommand{\Hilb}{{\normalfont\text{Hilb}}}
\newcommand{\Spec}{\normalfont\text{Spec}}

\newcommand{\biProj}{{\normalfont\text{BiProj}}}

\def\f0{\mathbf{0}}

\def\1{\mathbf{1}}

\newtheorem{headthm}{Theorem}

\newaliascnt{headcor}{headthm}

\aliascntresetthe{headcor}

\newaliascnt{headconj}{headthm}

\aliascntresetthe{headconj}

\newaliascnt{corollary}{theorem}
\newtheorem{corollary}[corollary]{Corollary}
\aliascntresetthe{corollary}

\newaliascnt{claim}{theorem}

\aliascntresetthe{claim}

\newaliascnt{lemma}{theorem}

\aliascntresetthe{lemma}

\newaliascnt{conjecture}{theorem}

\aliascntresetthe{conjecture}

\newaliascnt{proposition}{theorem}
\newtheorem{proposition}[proposition]{Proposition}
\aliascntresetthe{proposition}

\theoremstyle{definition}
\newaliascnt{definition}{theorem}

\aliascntresetthe{definition}

\newaliascnt{notation}{theorem}

\aliascntresetthe{notation}

\newaliascnt{example}{theorem}
\newtheorem{example}[example]{Example}
\aliascntresetthe{example}

\newaliascnt{examples}{theorem}

\aliascntresetthe{examples}

\newaliascnt{remark}{theorem}
\newtheorem{remark}[remark]{Remark}
\aliascntresetthe{remark}

\newaliascnt{question}{theorem}

\aliascntresetthe{question}

\newaliascnt{questions}{theorem}

\aliascntresetthe{questions}

\newaliascnt{problem}{theorem}

\aliascntresetthe{problem}

\newaliascnt{construction}{theorem}

\aliascntresetthe{construction}

\newaliascnt{setup}{theorem}

\aliascntresetthe{setup}

\newaliascnt{algorithm}{theorem}

\aliascntresetthe{algorithm}

\newaliascnt{observation}{theorem}

\aliascntresetthe{observation}

\newaliascnt{defprop}{theorem}

\aliascntresetthe{defprop}

\newaliascnt{fact}{theorem}

\aliascntresetthe{fact}

\DeclareFontFamily{OT1}{pzc}{}
\DeclareFontShape{OT1}{pzc}{m}{it}{<-> s * [1.100] pzcmi7t}{}
\DeclareMathAlphabet{\mathchanc}{OT1}{pzc}{m}{it}

\DeclareMathOperator{\sHom}{\mathchanc{Hom}}

\def\equationautorefname~#1\null{(#1)\null}
\def\sectionautorefname~#1\null{Section #1\null}
\def\subsectionautorefname~#1\null{\S #1\null}

\def\surjects{\twoheadrightarrow}

\title{Disconnected multigraded Hilbert schemes on $\PP^2\times\PP^1$}
\author{Yairon Cid-Ruiz}
\address{Department of Mathematics, North Carolina State University, Raleigh, NC 27695, USA}
\email{ycidrui@ncsu.edu}

\date{\today}
\subjclass[2020]{14C05, 13A02, 14D20, 14M25}
\keywords{Multigraded Hilbert schemes, biprojective spaces, Hilbert polynomials, connectedness}

\begin{document}

\begin{abstract}
We exhibit an infinite family of disconnected multigraded Hilbert schemes on the biprojective space $\PP^2_{\kk}\times_\kk\PP^1_{\kk}$.  
More precisely, for any integer $a \ge 2$ and $p_a(z_1,z_2) = 2az_1+az_2+3a-2a^2 \in \QQ[z_1,z_2]$, the multigraded Hilbert scheme $\Hilb_{p_a}(\PP^2_{\kk}\times_\kk\PP^1_{\kk})$ is disconnected.  
As a consequence, there exist infinitely many disconnected Haiman--Sturmfels multigraded Hilbert schemes even for a standard bigrading on a polynomial ring in only five variables.
\end{abstract}

\maketitle


\section{Introduction}

For every numerical polynomial $p(z) \in \QQ[z]$, a fundamental result of Hartshorne \cite{HartshorneConnectedness} shows that the
Grothendieck Hilbert scheme $\Hilb_{p}(\PP_\kk^n)$ is always connected (see also \cite{Reeves, Pardue, PeevaStillmanConnected} for subsequent proofs).
By contrast, there are well-known examples of disconnected Hilbert schemes with a fixed multigraded Hilbert function.  
Haiman and Sturmfels \cite{HaimanSturmfels} developed the general multigraded Hilbert scheme, Peeva and Stillman \cite{PeevaStillmanToric} studied the toric case, and Santos \cite{Santos} constructed disconnected toric Hilbert schemes.  
Those examples concern fixed Hilbert functions for affine semigroup gradings.
They do not decide whether disconnectedness can occur for the multigraded Hilbert scheme $\Hilb_p(\PP)$ of a product of projective spaces $\PP= \PP_\kk^{n_1} \times_\kk \PP_\kk^{n_2} \times_\kk \cdots \times_\kk \PP_\kk^{n_m}$ with multigraded Hilbert polynomial $p=p(z_1,\ldots,z_m) \in \QQ[z_1,\ldots,z_m]$.  
Since $\PP$ is a smooth projective toric variety, the existence and projectivity of $\Hilb_p(\PP)$ follow from work of Maclagan and Smith \cite{MaclaganSmith}.
Recent work has investigated the geometry and connectedness of several classes of multigraded Hilbert schemes (see \cite{CartwrightSturmfelsDiagonal,MaclaganSmithSmooth,HeringMaclaganTGraph,AholtSturmfelsThomas,MandziukLimitsToric,JelisiejewOpen,RamkumarSammartano,Ablett}).
Nevertheless, to the best of our knowledge, the connectedness of $\Hilb_p(\PP)$ has remained open in general.
We answer this question negatively by exhibiting the following infinite family of counterexamples on the biprojective space $\PP_\kk^2 \times_\kk \PP_\kk^1$.

\begin{headthm}\label{thm:main}
Let $\kk$ be a field, $a\geq2$ be an integer, and  $p_a(z_1,z_2)=2az_1+az_2+3a-2a^2\in\QQ[z_1,z_2]$.  
Then the multigraded Hilbert scheme $\Hilb_{p_a}(\PP^2_{\kk}\times_\kk\PP^1_{\kk})$ is disconnected.
\end{headthm}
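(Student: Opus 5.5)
The plan is to split $\Hilb_{p_a}(\PP^2_{\kk}\times_\kk\PP^1_{\kk})$ into two nonempty subsets that are both closed, and hence both open. Since $\Hilb_{p_a}$ is projective (Maclagan--Smith), closedness can be tested on $\kk$-points and by valuative arguments along one-parameter families. I expect the separating property to be a Hilbert-function condition, because those are semicontinuous. The candidate is whether $\HH^0(\II_Z(0,1))\neq 0$, i.e. whether the horizontal part of $Z$ lies in a slice $\PP^2\times\{t\}$, suitably refined to account for the vertical part.

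\textbf{Step 1: numerics.} For a one-dimensional subscheme $Z\subset \PP^2\times\PP^1$ with Hilbert polynomial $d_1z_1+d_2z_2+\chi$, the coefficients are $d_1=H_1\cdot[Z]$ and $d_2=H_2\cdot[Z]$, using $H_1^2H_2=1$ and $H_1^3=H_1H_2^2=0$. So $p_a$ fixes these intersection numbers at $(2a,a)$ and fixes $\chi=3a-2a^2$, i.e. arithmetic genus $(2a-1)(a-1)$. This is exactly the genus of a plane curve of degree $2a$.

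\textbf{Step 2: first family.} Let $A$ consist of a plane curve $\Gamma$ of degree $2a$ in a slice $\PP^2\times\{t\}$, together with $a$ vertical lines $\{q_i\}\times\PP^1$, $q_i\in\Gamma$, each meeting $\Gamma$ transversally in one point. Each such line adds $z_2+1-1=z_2$ to the Hilbert polynomial, so $A$ has Hilbert polynomial $p_a$.

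\textbf{Step 3: a genus bound and a dichotomy.} Next I would prove a Castelnuovo-type lower bound on $\chi(\OO_Z)$ for subschemes of class $(2a,a)$. The idea is to decompose the $1$-cycle into its vertical part, which is a union of fibres of $\PP^2\times\PP^1\to\PP^2$, and its horizontal part. Then I would bound $\chi$ via the exact sequence of a general divisor of bidegree $(0,1)$ or $(1,0)$ and Gruson--Lazarsfeld--Peskine-type arguments. The aim is to show that $\chi=3a-2a^2$ forces $Z$ into one of two rigid configurations:
\begin{enumerate}
\item[(I)] configurations like $A$, with horizontal part a degree-$2a$ curve inside one slice;
\item[(II)] a second family with the horizontal part genuinely spread over $\PP^1$, compensated by nonreduced or embedded structure; for example, a curve supported on a divisor of bidegree $(a,0)$, $(1,2)$, or $(1,1)$ with extra structure.
\end{enumerate}
For $a\ge 2$ the numerology should leave no room for intermediate cases. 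I would first pin down family (II) by a Macaulay2 computation of saturated bihomogeneous ideals for $a=2$, then generalize.

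\textbf{Step 4: closedness of both loci.} Locus (I) is closed by upper semicontinuity of $h^0(\II_Z(0,1))$ (or of a suitable twist), after checking that the condition characterizes (I) among schemes with Hilbert polynomial $p_a$. For locus (II), I would show that a flat limit of members of (II) cannot land in (I). This amounts to exhibiting a Hilbert-function value, such as $h^0(\II_Z(c,0))$ or $h^0(\OO_Z(1,0))$ for suitable $c$, that is constant on (II), strictly different on (I), and semicontinuous in the right direction. Equivalently, one compares saturated Hilbert functions in bidegrees where the two types differ.

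I expect Step 3 to be the main obstacle, namely the sharp bound and the classification of all extremal (possibly nonreduced, possibly with embedded points) subschemes. Embedded and isolated points increase $\chi$, so extremality should kill them, but nonreduced vertical or horizontal components need separate control. I would try first to reduce to the generic fibre over $\PP^1$ and to a general horizontal slice, and then use the classical bound for plane curves.
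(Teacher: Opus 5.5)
Your Step 2 is correct and useful: a degree-$2a$ plane curve in a slice, together with $a$ vertical lines, does have Hilbert polynomial $p_a$. But the proposal has a genuine gap. The whole argument rests on the classification in Step 3, which you do not carry out, and your description of the second family is wrong.

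The other family is the locus of complete intersections $V(\ell,g)$ with $\deg\ell=(1,0)$ and $\deg g=(a,2a)$, i.e.\ curves of type $(a,2a)$ on the smooth quadric $Q=V(\ell)\cong\PP^1\times\PP^1$.
\begin{itemize}
\item These curves have class $(2a,a)$, arithmetic genus $(a-1)(2a-1)$, and are generically smooth and irreducible.
\item No nonreduced or embedded ``compensation'' is involved; as you note yourself, embedded points only raise $\chi$.
\item The $(1,0)$ divisor carrying them is not among your candidate supports.
\end{itemize}
Your proposed test for (I) also fails as stated. For every member of family A one has $h^0(\II_Z(0,1))=0$, because a nonzero linear form in $y_0,y_1$ restricts to a nonzero form on each vertical line $\{q_i\}\times\PP^1$.

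More importantly, no classification of extremal subschemes is needed. The missing idea is to cut out one locus by two opposite semicontinuous conditions. For $Y$ with Hilbert polynomial $p_a$ one proves
$\HH^0(\II_Y(1,0))\neq0 \iff h^0(\OO_Y(1,0))\le 2 \iff Y=V(\ell,g)$.
The steps are as follows.
\begin{itemize}
\item If $h^0(\OO_Y(1,0))\le 2$, then some linear form lies in $\HH^0(\II_Y(1,0))$, since $h^0(\OO_{\PP}(1,0))=3$.
\item If $\ell$ vanishes on $Y$, then $Y\subset Q$. The reflexive hull of $\II_{Y/Q}$ is a line bundle $\OO_Q(-d,-e)$ with zero-dimensional cokernel of length $r$. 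The identity $ez_1+dz_2+d+e-de+r=p_a$ forces $(d,e)=(a,2a)$ and $r=0$. The equation lifts to $\PP$ because $\HH^1(\PP,\OO_{\PP}(a-1,2a))=0$.
\item Conversely, $h^0(\OO_Y(1,0))=2$ for such $Y$, because $\OO_Q(1-a,-2a)$ has vanishing $\HH^0$ and $\HH^1$ when $a\ge 2$.
\end{itemize}
Upper semicontinuity, applied on the universal family to the twisted ideal sheaf and the twisted structure sheaf, then makes this locus both closed and open.

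So it suffices to exhibit one point outside the locus. An example is $V(x_0^{2a},x_0y_0,x_1^ay_0)$, which has no equation of degree $(1,0)$. It is a degenerate member of your own family A: the plane curve $x_0^{2a}=0$ in the slice $y_0=0$, union the $a$-fold vertical line $V(x_0,x_1^a)$.

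The invariants $h^0(\II_Z(c,0))$ and $h^0(\OO_Z(1,0))$ that you list in Step 4 are, for $c=1$, exactly the right ones. They only work once (II) is identified as the $(1,0)$-complete-intersection locus, and at that point no classification of the complement is required.
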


For instance, when $a=2$, we obtain that the multigraded Hilbert scheme $\Hilb_{4z_1+2z_2-2}(\PP^2_{\kk}\times_\kk\PP^1_{\kk})$ is disconnected. 
As a direct consequence, there exist infinitely many disconnected Haiman--Sturmfels multigraded Hilbert schemes for a standard bigrading on a polynomial ring in five variables (see \autoref{cor:HS-disconnected}).

\section*{Acknowledgments}

The author received support from NSF grant DMS-2502321 and Simons Foundation Travel Support for Mathematicians Award MPS-TSM-00013551.
We are grateful to  Ritvik Ramkumar and Alessio Sammartano for several helpful conversations.

\noindent
\textbf{AI disclosure.}
OpenAI Codex, with GPT-5.5 and GPT-5.6,  was used for exploratory computations that helped identify the counterexample for $a=2$. 
The author conceived the theoretical framework, wrote the manuscript, and assumes full responsibility for its contents.

\section{Proofs of our results}

Let $\kk$ be a field and $S := \kk[x_0,x_1,x_2,y_0,y_1]$ be a standard bigraded polynomial ring with $\deg(x_i) = (1,0)$ and $\deg(y_j) = (0,1)$.
Fix an integer $a\geq2$.
We set 
$$
\PP \;:=\; \PP_\kk^2 \times_\kk \PP_\kk^1 = \biProj(S), \qquad p_a(z_1,z_2) \;:=\;2az_1+az_2+3a-2a^2 \qquad  \text{and} \qquad \sH_a \;:=\; \Hilb_{p_a}(\PP).
$$
The chosen polynomial is precisely the multigraded Hilbert polynomial of the complete intersections considered below.  

\begin{remark}
	Let	$Y=V(\ell,g)\subset \PP$ where $\deg(\ell)=(1,0)$, $\deg(g)=(a,2a)$, and $\ell,g$ form a regular sequence. 
	Let $Q=V(\ell)\cong\PP^1_\kk\times_\kk\PP^1_\kk \subset \PP$.  
	Then $Y$ is an effective Cartier divisor of type $(a,2a)$ on $Q$.  
	A direct computation gives
	$
	\chi\bigl(Y,\mathcal \OO_Y(z_1,z_2)\bigr)
		=(z_1+1)(z_2+1)-(z_1-a+1)(z_2-2a+1)
		=p_a(z_1,z_2).
	$
	Thus every complete intersection of bidegrees $(1,0)$ and $(a,2a)$
	defines a point of $\sH_a$.
\end{remark}

The key point of our construction is that this complete intersection locus has two opposite cohomological semicontinuity descriptions.
Our motivation to use these cohomological descriptions comes from the \emph{fiber-full scheme} \cite{CidRuizRamkumarFiber, CidRuizRamkumarLocal} (see \autoref{rem:fiber-full} and \autoref{rem:two-points}).

\begin{proposition}\label{prop:cohomological-characterization}
Let $Y \subset \PP = \PP_\kk^2 \times_\kk \PP_\kk^1$ be a closed subscheme such that $\chi\left(Y, \OO_Y(z_1,z_2)\right)=p_a(z_1,z_2)$.  
Then the following conditions are equivalent:
\begin{enumerate}[\rm (i)]
 \item $\HH^0\bigl(\PP,\mathcal \II_Y(1,0)\bigr)\neq0$.
 \item $h^0\bigl(Y,\mathcal \OO_Y(1,0)\bigr)\leq2$.
 \item $h^0\bigl(Y,\mathcal \OO_Y(1,0)\bigr)=2$.
 \item $Y = V(\ell, g) \subset \PP$ with  $\deg(\ell) = (1,0)$ and $\deg(g) = (a,2a)$.
\end{enumerate}
\end{proposition}
\begin{proof}
		Note that the condition (iv) implies that $Y$ is a complete intersection since by assumption  $\dim(Y)=1$ and $\dim(\PP) = 3$.
		Throughout this proof we use the K\"unneth formula \cite[\href{https://stacks.math.columbia.edu/tag/0BEC}{Tag 0BEC}]{stacks-project}  several times.
		
	(iii) $\Rightarrow$ (ii): This implication is clear.

	(ii) $\Rightarrow$ (i):
If (ii) holds, then the exact sequence
$$
0 \;\longrightarrow\; \HH^0\bigl(\PP,\II_Y(1,0)\bigr) \;\longrightarrow\; \HH^0\bigl(\PP,\mathcal \OO_{\PP}(1,0)\bigr)
 \;\longrightarrow\; \HH^0\bigl(Y,\mathcal \OO_Y(1,0)\bigr)
$$
implies that $\HH^0\bigl(\PP,\II_Y(1,0)\bigr)\neq0$ because $h^0\bigl(\PP,\mathcal \OO_{\PP}(1,0)\bigr)=3$.
This proves the implication (ii) $\Rightarrow$ (i).

(i) $\Rightarrow$ (iv):
Assume that (i) holds and choose a nonzero element
$\ell\in \HH^0(\PP,\II_Y(1,0))$.
Let $Q=V(\ell) \cong\PP_\kk^1\times_\kk\PP_\kk^1 \subset \PP$.
Let $\II = \II_{Y/Q} \subset \OO_Q$ be the ideal sheaf of $Y$ in $Q$.
Double-dualizing
$\II\hookrightarrow\OO_Q$ gives the generically injective map 
$$
\II^{\vee\vee} = \sHom_{\OO_Q}\left(\sHom_{\OO_Q}(\II, \OO_Q), \OO_Q\right) \;\longrightarrow\; \OO_Q;
$$ 
this map is actually injective because $\II^{\vee\vee}$ is torsion-free. 
Since $Q$ is smooth, the rank-one
reflexive sheaf $\II^{\vee\vee}$ is a line bundle.\footnote{For more details on reflexive sheaves and related topics, see \cite[\href{https://stacks.math.columbia.edu/tag/0AVT}{Tag 0AVT}]{stacks-project}, \cite[\S 1.4]{BH}.} 
Therefore, there is an effective Cartier divisor $D$ on $Q$ such that  
$$
\II^{\vee\vee} \;\cong\; \mathcal \OO_Q(-D) \;\cong\; \OO_Q(-d,-e).
$$ 
Furthermore, since $\II$ and $\II^{\vee\vee}\cong \OO_Q(-D)$ agree in codimension one, the quotient
$\mathcal K=\mathcal \OO_Q(-D)/\II$ is zero-dimensional.
Let $r = \chi(Q, \mathcal{K}(z_1,z_2)) = h^0(Q, \mathcal{K})$.
From the short exact sequence $0\rightarrow\mathcal \OO_Q(-d,-e)\rightarrow\mathcal \OO_Q\rightarrow\mathcal \OO_D\rightarrow0$, we obtain
\begin{align*}
 \chi\bigl(D,\mathcal \OO_D(z_1,z_2)\bigr)
 &\;=\;(z_1+1)(z_2+1)-(z_1-d+1)(z_2-e+1)\\
 &\;=\;(z_1+1)(z_2+1)-\bigl((z_1+1)(z_2+1)-e(z_1+1)-d(z_2+1)+de\bigr)\\
 &\;=\;ez_1+dz_2+d+e-de.
\end{align*}
Then the short exact sequence $ 0\rightarrow\mathcal K\rightarrow \OO_Y \rightarrow \OO_D\rightarrow 0$ yields
$$
 p_a(z_1,z_2) \;=\; \chi\left(Y, \OO_Y(z_1,z_2)\right) \;=\;
 ez_1+dz_2+d+e-de+r.
$$
By assumption $p_a(z_1,z_2) = 2az_1+az_2+3a-2a^2$, and so we get $d = a$, $e= 2a$ and $r=0$.
Therefore, $Y=D$ is an effective Cartier divisor of type $(a,2a)$ on $Q \cong \PP_\kk^1 \times_\kk \PP_\kk^1$. 
 Its defining section lifts to $\PP$ because
$\HH^1(\PP,\mathcal \OO_{\PP}(a-1,2a))=0$ and so the restriction map $\HH^0(\PP,\mathcal \OO_{\PP}(a,2a)) \surjects \HH^0(Q,\mathcal \OO_Q(a,2a))$ is surjective. 
This proves (i) $\Rightarrow$ (iv).

(iv) $\Rightarrow$ (iii):
Assume that (iv) holds and write $Y = V(\ell, g) \subset \PP$ with $\deg(\ell)=(1,0)$ and $\deg(g)=(a, 2a)$.
Let $Q = V(\ell) \cong \PP_\kk^1 \times_\kk \PP_\kk^1 \subset \PP$.
By combining the short exact sequence
$$
 0\longrightarrow\mathcal \OO_Q(1-a,-2a)
 \longrightarrow\mathcal \OO_Q(1,0)
 \longrightarrow\mathcal \OO_Y(1,0)\longrightarrow0
$$
with the vanishings
$\HH^0(Q,\mathcal \OO_Q(1-a,-2a))
 =0$ and $\HH^1(Q,\mathcal \OO_Q(1-a,-2a))=0$,\footnote{Here we use the condition $a\ge2$.}
we obtain the equalities
$
 h^0(Y,\mathcal \OO_Y(1,0))=h^0(Q,\mathcal \OO_Q(1,0))=2.
$
This concludes the proof of the proposition.
\end{proof}

\begin{remark}\label{rem:fiber-full}
To simplify the notation, suppose in this remark that $a=2$.
Fix positive integers $u,v>0$. 
Let $Y=V(\ell,g)\subset \PP$ be as in condition~(iv) of
\autoref{prop:cohomological-characterization}.  
The line bundle $\OO_{\PP}(u,v)$ defines a Segre--Veronese embedding of $\PP$.
Let $Q=V(\ell) \cong \PP_\kk^1 \times_\kk \PP_\kk^1 \subset \PP$.
For every $t\in\ZZ$, set $h_Y^i(t):=h^i(Y,\OO_Y(ut,vt))$.
From the short exact sequence $0\rightarrow \OO_Q(ut-2,vt-4) \rightarrow\OO_Q(ut,vt) \rightarrow \OO_Y(ut,vt)\rightarrow0$, we can compute that
$$
h_Y^0(t) \;=\;
\begin{cases}
0 & \text{if }t<0,\\
1 & \text{if }t=0,\\
(4u+2v)t-2 & \text{if }t>0,
\end{cases}
\qquad
h_Y^1(t) \;=\;
\begin{cases}
-(4u+2v)t+2 & \text{if }t<0,\\
3 & \text{if }t=0,\\
0 & \text{if }t>0,
\end{cases}
$$
and $h_Y^i(t)=0$ for $i\geq2$.
Thus this cohomology table is independent of the specific $Y$.  
We obtain that the Segre--Veronese embeddings of all of these $Y$ belong to the same fiber-full scheme (see \cite{CidRuizRamkumarFiber,CidRuizRamkumarLocal}).
\end{remark}

For every (not necessarily closed) point $\tau \in \sH_a$, let $\kappa(\tau)$ be the residue field of the local ring $\OO_{\sH_a, \tau}$ and 
$$
Y_\tau \;\subset\; \PP_\tau \;:=\; \PP \times_\kk \kappa(\tau) \;\cong\; \PP_{\kappa(\tau)}^2 \times_{\kappa(\tau)} \PP_{\kappa(\tau)}^1
$$ 
be the corresponding closed subscheme with Hilbert polynomial $p_a(z_1,z_2)$.
The next corollary shows that the locus studied in \autoref{prop:cohomological-characterization} is both open and closed in $\sH_a=\Hilb_{p_a}(\PP_\kk^2 \times_\kk \PP_\kk^1)$.

\begin{corollary}\label{cor:clopen}
The following subset of $\sH_a$ is open and closed:
$$
\sZ_a \;:=\; \Big\lbrace \tau \in \sH_a \;\mid\;  \text{$Y_\tau \subset \PP_\tau$ is a complete intersection of bidegrees $(1,0)$ and $(a,2a)$} \Big\rbrace.
$$ 
\end{corollary}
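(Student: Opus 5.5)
The plan is to use \autoref{prop:cohomological-characterization} to describe $\sZ_a$ in two cohomological ways over arbitrary residue fields, and then apply semicontinuity to the universal family in both directions. Let $\mathcal Y\subset \PP\times_\kk \sH_a$ be the universal family. It is flat and proper over $\sH_a$, and its fibre over $\tau$ is $Y_\tau\subset\PP_\tau$. The proposition is stated over $\kk$, but its proof only uses that the ground field is a field, so it holds verbatim over $\kappa(\tau)$. Applied to $Y_\tau$, it gives
$$
\sZ_a \;=\; \bigl\lbrace \tau \mid h^0(Y_\tau,\OO_{Y_\tau}(1,0))\le 2\bigr\rbrace \;=\; \bigl\lbrace \tau \mid h^0(\PP_\tau,\II_{Y_\tau}(1,0))\ge 1\bigr\rbrace .
$$

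\textbf{Openness.} Let $\OO_{\mathcal Y}(1,0)$ be the pullback of $\OO_{\PP}(1,0)$. It is a coherent sheaf on $\mathcal Y$, flat over $\sH_a$. By upper semicontinuity of $\tau\mapsto h^0(Y_\tau,\OO_{Y_\tau}(1,0))$ (EGA III / Hartshorne III.12.8, applied on the noetherian scheme $\sH_a$, which is projective by Maclagan--Smith), the locus where this dimension is $\le 2$ is open. One should check that $h^0$ of the fibre over $\kappa(\tau)$ is the quantity that appears in (ii), and it is by construction.

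\textbf{Closedness.} The ideal sheaf $\II_{\mathcal Y}\subset \OO_{\PP\times\sH_a}$ is flat over $\sH_a$, since $\OO_{\PP\times\sH_a}$ and $\OO_{\mathcal Y}$ are both flat. Tensoring $0\to\II_{\mathcal Y}\to\OO\to\OO_{\mathcal Y}\to0$ with $\kappa(\tau)$ stays exact by Tor-vanishing, and this identifies $\II_{\mathcal Y}\otimes\kappa(\tau)$ with $\II_{Y_\tau}$. Upper semicontinuity applied to $\II_{\mathcal Y}(1,0)$ then shows that $\lbrace \tau \mid h^0(\PP_\tau,\II_{Y_\tau}(1,0))\ge1\rbrace$ is closed. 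By (i)$\Leftrightarrow$(iv), this set is $\sZ_a$.

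The main point needing care is this identification of fibres of the universal ideal sheaf with the ideal sheaves of the fibres, which is what flatness provides. The other small point is that $\sZ_a$, defined via complete intersections over $\kappa(\tau)$, really matches conditions (i) and (ii) over non-closed points. This holds because the proposition's proof is field-independent.
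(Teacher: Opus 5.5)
Your proposal is correct and follows the paper's proof. Both arguments apply upper semicontinuity on the universal family to the flat sheaves $\OO_{\mathcal Y}(1,0)$ and $\II_{\mathcal Y}(1,0)$, making $\{h^0(\OO_{Y_\tau}(1,0))\le 2\}$ open and $\{h^0(\II_{Y_\tau}(1,0))\ge 1\}$ closed, and then identify both sets with $\sZ_a$ via \autoref{prop:cohomological-characterization}; your added remarks (that the proposition's proof works over any residue field $\kappa(\tau)$, and that flatness gives $\II_{\mathcal Y}\otimes\kappa(\tau)\cong\II_{Y_\tau}$) spell out two points the paper uses without comment.
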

\begin{proof}
Let $\sU\subset \mathfrak{X} := \PP\times_\kk \sH_a$ be the universal family and $\II_\sU \subset \OO_{\mathfrak{X}}$ be its ideal sheaf.  
Let 
$$
\OO_\sU(1,0) \;:=\; \OO_\sU \otimes \pi^*(\OO_{\PP}(1,0)) \quad  \text{ and } \quad  \II_\sU(1,0) \;:=\; \II_\sU \otimes \pi^*(\OO_{\PP}(1,0)),
$$ 
where $\pi : \mathfrak{X} \rightarrow \PP$ is the natural projection.
Let $q : \mathfrak{X} \rightarrow \sH_a$ be the natural projection and $\mathfrak{X}_\tau = q^{-1}(\tau)= \mathfrak{X} \times_{\sH_a} \Spec(\kappa(\tau))$ be the corresponding fiber for any $\tau \in \sH_a$.
Both $\OO_\sU(1,0)$ and $\II_\sU(1,0)$ are flat over $\sH_a$.
Then the upper semicontinuity theorem (see, e.g., \cite[\S III.12]{HARTSHORNE}) shows that
$$
 \Bigl\{\tau \in \sH_a \;\mid\; h^0\big(\PP_\tau,\II_{Y_\tau}(1,0)\big) = h^0\big(\mathfrak{X}_\tau, \II_\sU(1,0) \otimes \kappa(\tau)\big) \geq1\Bigr\}
$$
is closed, whereas
$$
 \Bigl\{\tau \in \sH_a \;\mid\; h^0\big(Y_\tau,\OO_{Y_\tau}(1,0)\big) = h^0\big(\mathfrak{X}_\tau, \OO_\sU(1,0) \otimes \kappa(\tau)\big) \leq2\Bigr\}
$$
is open. 
By \autoref{prop:cohomological-characterization}, 
both subsets coincide with $\sZ_a$.
\end{proof}

\begin{example}\label{ex:inside}
		The complete intersection
		$
		Y_{1,a} = V\left(x_0,x_1^ay_0^{2a}\right) \subset \PP
		$
		yields a distinguished point inside the subset $\sZ_a \subset \sH_a$.
\end{example}

We are ready to prove our main result. 

\begin{proof}[Proof of \autoref{thm:main}]
		By \autoref{cor:clopen} and \autoref{ex:inside}, we have that $\sZ_a$ is a nonempty open and closed subset of $\sH_a$.
		Let $I_{2,a} = (x_0^{2a},x_0y_0,x_1^ay_0)$, $J_a=(x_0,x_1^a)$ and $K_a = (x_0^{2a},y_0)$.
		Notice that $I_{2,a} = J_a \cap K_a$.
		Consider the closed subscheme $Y_{2,a} = V(I_{2,a}) \subset \PP$. 
		Since $\HH^0\left(\PP, \II_{Y_{2,a}}(1,0)\right)=0$, \autoref{prop:cohomological-characterization} implies that $[Y_{2,a}] \notin \sZ_a$.
		Thus to complete the proof, it suffices to show that $[Y_{2,a}] \in \sH_a$.
		From the short exact sequence 
		$$
		 0 \;\longrightarrow\; \OO_{Y_{2,a}}
		 \;\longrightarrow\;  \OO_{V(J_a)} \oplus \OO_{V(K_a)}
		 \;\longrightarrow\; \OO_{V(J_a+K_a)} \;\longrightarrow\; 0,
		$$
		we obtain
		\begin{align*}
			\chi\left(Y_{2,a}, \OO_{Y_{2,a}}(z_1,z_2)\right) &\;=\; \chi\left(\PP, \OO_{V(J_a)}(z_1,z_2)\right) + \chi\left(\PP, \OO_{V(K_a)}(z_1,z_2)\right) -\chi\left(\PP, \OO_{V(J_a+K_a)}(z_1,z_2)\right) \\
			&\;=\; a(z_2+1) \;+\; (2az_1+3a-2a^2) \;-\; a 
			\;=\; p_a(z_1,z_2).
		\end{align*}
		Therefore $[Y_{2,a}]\in\sH_a$.  
		This shows that $\sH_a$ is disconnected and concludes the proof of the theorem.
\end{proof}

\begin{remark}
		The openness of $\sZ_a$ is consistent with the classical projective setting: see \cite[Proposition~2.2.1]{BenoistThesis}, \cite[Corollary~6.20]{BertoneCioffiOrthSeiler}. 
		The unexpected feature of the multiprojective setting is that, for every $a\geq2$, the complete intersection locus $\sZ_a$ is also a proper closed subset.
\end{remark}

Although our proof is not combinatorial and instead relies on controlling certain cohomology loci, we identify the following two distinguished points of $\sH_a$, which \emph{cannot} be connected by a sequence of flat degenerations.

\begin{remark}
	\label{rem:two-points}
	With respect to the degree-lexicographic order induced by
	$x_0>x_1>x_2>y_0>y_1$, the two defining ideals
	\[
	I_{1,a}=I_{Y_{1,a}} \;=\; \left(x_0,x_1^ay_0^{2a}\right)
	\qquad\text{and}\qquad
	I_{2,a}=I_{Y_{2,a}} \;=\; \left(x_0^{2a},x_0y_0,x_1^ay_0\right)
	\]
	are \emph{bilex ideals} in the sense of \cite{AramovaCronaDeNegri}.
	These two bilex points lie in distinct connected components of $\sH_a = \Hilb_{p_a}(\PP_\kk^2 \times_\kk \PP_\kk^1)$.
	Moreover, with respect to the Segre embedding given by $\OO_{\PP}(1,1)$, we have $h^0(Y_{1,a}, \OO_{Y_{1,a}}(1,1))=4$ and $h^0(Y_{2,a}, \OO_{Y_{2,a}}(1,1))=a+3$; thus the Segre embeddings of $Y_{1,a}$ and $Y_{2,a}$ do not belong to the same fiber-full scheme.
\end{remark}

\begin{corollary}\label{cor:HS-disconnected}
For every integer $a\geq2$, there exists a function $h_a:\mathbb Z^2\to\mathbb N$ such that the
Haiman--Sturmfels multigraded Hilbert scheme
$\operatorname{Hilb}_S^{h_a}$ is disconnected. The functions $h_a$ are pairwise distinct.
\end{corollary}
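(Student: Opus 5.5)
The strategy is to pass from the biprojective Hilbert scheme $\sH_a$ to a Haiman--Sturmfels Hilbert scheme $\operatorname{Hilb}_S^{h_a}$ through a truncation comparison, and then carry the disconnection from \autoref{thm:main} across that comparison.

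First, I would choose $h_a$. Pick a bidegree $\mathbf{d}=(d_1,d_2)$ with $d_1,d_2\gg0$ such that, for every $\tau\in\sH_a$, the ideal $I_{Y_\tau}$ agrees in degrees $\ge \mathbf{d}$ with a saturated ideal whose Hilbert function equals $p_a$ in all degrees $\ge\mathbf d$. By Maclagan--Smith \cite{MaclaganSmith}, such a uniform bound exists, because $\sH_a$ is constructed as a closed subscheme of a Haiman--Sturmfels Hilbert scheme built from a finite set of degrees. Then define
$$
h_a(\mathbf{u}) \;=\; \begin{cases} \dim_\kk S_{\mathbf u} & \text{if } \mathbf u\not\ge \mathbf d,\\ p_a(\mathbf u) & \text{if } \mathbf u\ge\mathbf d,\end{cases}
$$
with $h_a(\mathbf u)=0$ outside $\mathbb N^2$. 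The points of $\operatorname{Hilb}_S^{h_a}$ are then homogeneous ideals $I$ with $I_{\mathbf u}=0$ for $\mathbf u\not\ge\mathbf d$ and with Hilbert function $p_a$ in degrees $\ge\mathbf d$.

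Second, I would identify $\operatorname{Hilb}_S^{h_a}$ with $\sH_a$. On $T$-points, the map $I\mapsto \biProj(S/I)$ goes one way, and $Y\mapsto (\pi_*\II_Y)_{\ge\mathbf d}$ goes the other way. For a flat family $Y\subset\PP_T$ with Hilbert polynomial $p_a$, uniform regularity and cohomology-and-base-change give that $\bigoplus_{\mathbf u\ge\mathbf d}\HH^0(\PP_T,\II_Y(\mathbf u))$ has locally free graded pieces of the correct rank and commutes with base change. Conversely, an ideal in $\operatorname{Hilb}_S^{h_a}$ defines a closed subscheme whose Hilbert polynomial agrees with $p_a$ on $\mathbf u\ge\mathbf d$, hence equals $p_a$. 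The main obstacle is to check that these constructions are mutually inverse. For this we need every $I\in\operatorname{Hilb}_S^{h_a}$ to equal the $\ge\mathbf d$ truncation of its own saturation. That requires choosing $\mathbf d$ beyond a Gotzmann-type regularity bound for all ideals with this Hilbert function in the multigraded setting, which is exactly the content of the Maclagan--Smith construction.

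If I cannot establish a full isomorphism, I would settle for a weaker argument that suffices for disconnectedness. The sets $\{\,I:\ \biProj(S/I)\in\sZ_a\,\}$ and its complement both pull back from a morphism $\operatorname{Hilb}_S^{h_a}\to\sH_a$, so each is open and closed. It then remains to show both are nonempty. The truncations $(I_{1,a})_{\ge\mathbf d}$ and $(I_{2,a})_{\ge\mathbf d}$ lie in $\operatorname{Hilb}_S^{h_a}$ once $\mathbf d$ is at least their regularity, and their images lie in $\sZ_a$ and outside it respectively, by \autoref{ex:inside} and the proof of \autoref{thm:main}.

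Finally, the functions $h_a$ are pairwise distinct because they agree with the distinct polynomials $p_a$ in large degrees; for instance, the coefficient of $z_2$ recovers $a$.
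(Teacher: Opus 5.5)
Your proposal is correct and takes the same route as the paper, whose proof simply cites Maclagan--Smith (Theorem 6.2 and its proof) to identify $\sH_a$ with $\operatorname{Hilb}_S^{h_a}$ for a function of the truncated shape you describe (eventually equal to $p_a$), and gets pairwise distinctness from the distinctness of the $p_a$; you are essentially unpacking that construction. Your fallback is also valid: it pulls back the clopen set $\sZ_a$ and its complement along the morphism $\operatorname{Hilb}_S^{h_a}\to\sH_a$ and uses the truncations of $I_{1,a}$ and $I_{2,a}$, so it needs neither the inverse map nor a uniform regularity bound, only the regularity of those two explicit ideals.
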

\begin{proof}
By \cite[Theorem 6.2 and its proof]{MaclaganSmith}, we can identify $\sH_a$ with $\operatorname{Hilb}_S^{h_a}$ for a function $h_a$ with eventual polynomial $p_a$; the $h_a$ are pairwise distinct because the $p_a$ are.
\end{proof}

\bibliographystyle{amsalpha}
\bibliography{references}

\end{document}